
\documentclass[12pt]{amsart}
\usepackage{amssymb}
\usepackage{hyperref}


\topmargin0cm

\oddsidemargin0.8cm

\textwidth15cm

\evensidemargin0.8cm

\textheight19.5cm

\parskip.25em


\numberwithin{equation}{section}
\newtheorem{theorem}{Theorem}[section]

\newtheorem{lemma}[theorem]{Lemma}
\theoremstyle{proposition}
\newtheorem{proposition}[theorem]{Proposition}
\theoremstyle{corollary}

\theoremstyle{definition}

\newtheorem{remark}[theorem]{Remark}


 \DeclareMathOperator\rank{{\rm rk}}


\def\Sp{{\rm Sp}}


\newcommand\LG{{\rm G}}

\newcommand\R{\mathbb{R}}
\newcommand\SO{{\rm SO}}
\newcommand\SUxU[2]{ {\rm S(U(}#1)\,{\rm U(}#2{\rm ))} } 
\newcommand\SU{{\rm SU}}

\newcommand\Spin{{\rm Spin}}

\newcommand\U{{\rm U}}

\newcommand\g{{\mathfrak g}}

\newcommand\h{{\mathfrak h}}

\newcommand\product{\,{\cdot}\,}
\newcommand\p{{\mathfrak p}}

\newcommand\so{\mathfrak s \mathfrak o}



\pagenumbering{arabic}
\pagestyle{plain}

\title{Polar actions on symmetric spaces of higher rank}

\author[A. Kollross]{Andreas Kollross }

\author[A. Lytchak]{Alexander Lytchak}

\thanks{The second-named author was supported by a Heisenberg grant of the DFG
and by the SFB~``Groups, Geometry and Actions''}

\address{Universit\"{a}t Stuttgart, Fachbereich Mathematik, Institut f\"{u}r Geometrie und Topologie, Pfaffenwaldring 57,
D--70569 Stuttgart, Germany}

\address{Universit\"{a}t zu K\"{o}ln, Mathematisches Institut, Weyertal 86--90, D--50931 K\"{o}ln, Germany}

\email{kollross@mathematik.uni-stuttgart.de}

\email{alytchak@math.uni-koeln.de}

\date{\today}

\subjclass[2000]{53C35, 57S15}

\keywords{symmetric space, polar action, cohomogeneity two}

%
%
%

\begin{document}
\maketitle

\begin{abstract}
We show that polar actions of cohomogeneity two on simple compact Lie groups of
higher rank, endowed with a biinvariant Riemannian metric, are hyperpolar.
Combining this with a recent result of the second-named author, we are able to
prove that polar actions induced by reductive algebraic subgroups in the
isometry group of an irreducible Riemannian symmetric space of higher rank are
hyperpolar. In particular, this result affirmatively settles the conjecture
that polar actions on irreducible compact symmetric spaces of higher rank are
hyperpolar.
\end{abstract}


\section{Introduction and main results}
\label{SecIntro}


A proper isometric action of a Lie group on a Riemannian manifold is called
\emph{polar} if there exists an immersed submanifold which meets every orbit
and each such intersection is orthogonal. Such a submanifold is called a
\emph{section} of the Lie group action. If there is a section which is flat in
its induced Riemannian metric, then the action is called \emph{hyperpolar}.

A typical example of a polar action is the following. Consider the orthogonal
group~${\rm O}(n)$, which acts on the space of real symmetric $n \times
n$-matrices by conjugation. If this space is endowed with the scalar product
given by $\langle X, Y \rangle ={\rm tr}(XY)$, then the action is isometric. It is a
well-known fact of linear algebra that every real symmetric matrix is conjugate
to a diagonal matrix, which means that the linear subspace consisting of the
diagonal matrices meets every orbit of the group action and it is easy to see
that this subspace meets the orbits orthogonally. Hence the action is polar (in
fact hyperpolar) with the subspace of diagonal matrices as a section. As this
example suggests, one may think of the elements in a section as canonical
representatives of the group orbits. The word ``polar'' is used because of the
analogy to the usual polar coordinates on~$\R^2$ which can be regarded as being
given by the polar action of $\rm O(2)$ on~$\R^2$, where the orbits are
concentric circles around the origin and any straight line through the origin
is a section. Indeed, polar actions on Riemannian manifolds are generalizations
of the usual polar, cylindrical, or spherical coordinates in Euclidean space to
general Riemannian manifolds. For surveys on polar actions, see~\cite{th1} and
\cite{th2}.

Polar actions are closely related to Riemannian symmetric spaces. For example,
if $G$ is the isometry group of a Riemannian symmetric space~$M$ and \[K = \{ g
\in G \mid g \cdot p = p \}\] is the isotropy subgroup at a point~$p \in M$ of
the $G$-action on~$M=G/K$, then the \emph{isotropy action} of~$M$, i.e.\ the
action of~$K$ on~$M$, is polar, in fact hyperpolar.

Indeed, this action is well known from the theory of symmetric
spaces~\cite{helgason}; the \emph{flats} of a Riemannian symmetric space are
the maximal totally geodesic and flat subspaces; all flats are congruent under
the action of the isometry group; the \emph{rank} of the symmetric space is
defined to be the dimension of the flats. The sections of the isotropy action
are given by those flats of~$M$ which contain the point~$p$. We say that a
symmetric space is of \emph{higher rank} if the rank of~$M$ is greater than
one.

The isotropy subgroup~$K$ also acts on the tangent space $T_pM$ by the
differentials of isometries. This linear representation of~$K$ is called the
\emph{isotropy representation} of the symmetric space~$M$; it is hyperpolar and
the sections are the linear subspaces of~$T_pM$ tangent to flats. Conversely,
the main result of~\cite{dadok} says that all orthogonal representations of
compact Lie groups on Euclidean space which are polar are given by this
construction. More precisely, Dadok shows in his article~\cite{dadok} that any
polar representation is \emph{orbit equivalent} to an isotropy representation
of a Riemannian symmetric space, i.e.\ after a suitable isometric
identification of the two representation spaces, the connected components of
the orbits coincide.

A special case of a Riemannian symmetric space is a connected compact Lie
group~$L$ equipped with a biinvariant Riemannian metric. These spaces are
called symmetric spaces of Type~II in~\cite{helgason}. In this case, left and
right multiplication with arbitrary group elements are isometries and hence the
group $L \times L$ acts isometrically on~$L$ by
\begin{equation}\label{EqLRAction}
(a,b) \cdot \ell := a \, \ell \, b^{-1}.
\end{equation}
In fact, the connected component of the isometry group of~$L$ with this metric is covered by $L \times L
=: G$. The isotropy subgroup $G_e$ at the identity element~$e \in L$ is given
by the diagonal $\{(\ell,\ell) \mid \ell \in L \} =: K$. The isotropy action of
the symmetric space~$L$ at the identity element~$e$ is the restriction of
(\ref{EqLRAction}) to~$K$ and hence coincides with the action of~$L$ on itself
by conjugation. The sections of this hyperpolar action are the maximal tori
of~$L$. It is a well-known fact of Lie theory that if $T \subseteq L$ is a
maximal torus, then any element of a connected compact Lie group is conjugate
to some element in~$T$, i.e.\ $T$~meets all the orbits of the action of~$L$ on
itself by conjugation (and it can be easily checked that $T$~meets the orbits
orthogonally). Indeed, in a symmetric space of Type~II, the flats containing
the identity element~$e$ are exactly the maximal tori. In particular, the rank
of a symmetric space of Type~II equals the dimension of the maximal tori.

Isotropy actions of Riemannian symmetric spaces provide examples of hyperpolar
actions, but there is also a more general construction which involves two (in
general) different symmetric spaces. Let $G$ be a compact semisimple Lie group
and let $\sigma$ and $\tau$ be two involutive automorphisms. Let $H$ and $K$ be
subgroups of~$G$ such that $G_o^\sigma \subseteq H \subseteq G^\sigma$ and
$G_o^\tau \subseteq K \subseteq G^\tau$, i.e.\ the groups $H$ and $K$ are open
subsets of the fixed point sets of $\sigma$ and $\tau$. Assume that $G/K$ is
endowed with the $G$-invariant metric induced by the negative of the Killing
form of~$\g$. In this way, $G/K$ becomes a Riemannian symmetric space whose
connected component of the isometry group is covered by $G$, see~\cite{helgason}. In particular, the
closed subgroup $H \subseteq G$ naturally acts by isometries on $G/K$ and this
action is in fact hyperpolar, see~\cite{hptt}. Actions which arise in this way
are called \emph{Hermann actions} since they were introduced in~\cite{hermann}.
Of course, the isotropy action described above is just the special case of this
construction where $H=K$

We show in this article that any polar action of cohomogeneity~two on a simple
compact classical Lie group~$L$ of higher rank and endowed with a biinvariant
Riemannian metric is hyperpolar. As we will point out below, this result
provides the last step in the proof of the following theorem, which had been
conjectured to be true by Biliotti in~\cite{biliotti}.

\begin{theorem}\label{ThMain}
Any polar action on an irreducible compact Riemannian symmetric space of higher
rank which has an orbit of positive dimension is hyperpolar.
\end{theorem}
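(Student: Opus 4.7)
The plan is to reduce Theorem~\ref{ThMain} to the one remaining open case that the body of this paper resolves. Let $M$ be an irreducible compact Riemannian symmetric space of higher rank and let a connected Lie group~$H$ act polarly on $M$ with positive-dimensional orbits, so that the cohomogeneity $c$ of the action satisfies $1 \le c < \dim M$. The case $c=1$ is trivially hyperpolar, since a one-dimensional section is automatically flat. For $c \ge 3$, the recent result of the second-named author alluded to in the abstract already implies hyperpolarity, using that a closed subgroup of the compact isometry group is automatically reductive as an algebraic group. Thus the only remaining situation is $c=2$.

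The cohomogeneity-two case splits according to whether $M$ is of Type~I or Type~II. For irreducible symmetric spaces of Type~I, i.e.\ not of group type, the classification of polar actions on such spaces carried out in earlier work of the first-named author shows that all polar actions are hyperpolar. For Type~II, i.e.\ $M=L$ a simple compact Lie group of higher rank endowed with a biinvariant metric, the polar actions arise from certain closed subgroups $H\subseteq L\x L$ acting via~(\ref{EqLRAction}); the case in which $L$ is exceptional is handled by previously established classifications. The remaining case, $L$ classical, is precisely the main technical result announced in the introduction, and combining it with the reductions above yields Theorem~\ref{ThMain}.

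The heart of the argument is therefore the cohomogeneity-two case on a simple classical compact Lie group~$L$. The natural strategy is to examine a two-dimensional section $\Sigma\subseteq L$. Since $\Sigma$ is totally geodesic in a symmetric space, it is itself a locally symmetric surface, and because $L$ has non-negative curvature, $\Sigma$ has constant non-negative sectional curvature; hyperpolarity of the action is equivalent to this curvature being zero. Assuming for contradiction that it is positive, the slice representation at a regular point becomes a cohomogeneity-two polar representation whose section is a round disk, which by Dadok's classification severely restricts the isotropy data. One then proceeds along the classical series $\LA_n$, $\LB_n$, $\LC_n$, $\LD_n$, enumerating those subgroups $H\subseteq L\x L$ that could produce a cohomogeneity-two action, and ruling out each configuration using representation-theoretic information drawn from the regular slice and from singular isotropy groups. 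The principal obstacle is the combinatorial complexity of this case-by-case analysis rather than any single conceptual step.
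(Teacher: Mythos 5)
Your reduction of Theorem~\ref{ThMain} to the cohomogeneity-two case on a classical simple compact Lie group of higher rank is correct and is exactly the reduction the paper performs: cohomogeneity one is trivially hyperpolar, cohomogeneity at least three is covered by the result of~\cite{lytchak}, Type~I is covered by~\cite{polar}, and exceptional Type~II by~\cite{exc}. The problem is that the remaining case --- the entire mathematical content of this paper --- is not proved in your proposal; it is only announced as ``enumerate the subgroups $H\subseteq L\times L$ that could act with cohomogeneity two along the series $\LA_n,\LB_n,\LC_n,\LD_n$ and rule each out.'' That enumeration is never carried out, and carried out naively it is not a feasible finite task: there is no a priori list of cohomogeneity-two subgroups of $L\times L$ to run through, and producing one is essentially as hard as the theorem itself. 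There is also a technical slip in the sketch: at a regular (principal) point the slice representation on the two-dimensional normal space is trivial, so it is not ``a cohomogeneity-two polar representation whose section is a round disk''; the representation-theoretic information you want lives at the \emph{singular} orbits, corresponding to the vertices of the quotient triangle.

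The paper avoids the case-by-case analysis you anticipate by a structural trichotomy. By Dynkin's description of maximal subgroups (Proposition~\ref{PropMaxSubGr}), $H$ lies either in a diagonal $\Delta^\sigma L$ --- handled by~\cite{exc} --- or in a product $H_1\times H_2$ with $H_1,H_2\subsetneq L$. If $H$ itself splits as $\pi_1(H)\times\pi_2(H)$, a two-line bracket computation (Lemma~\ref{LmPolDiag}) shows that polarity with cohomogeneity two forces the normal space to be abelian, i.e.\ the action is hyperpolar: for $v=(X,-X)$, $w=(Y,-Y)$ in the normal space, $[v,w]=([X,Y],[X,Y])$ being orthogonal to $\h_1\times\h_2$ forces $([X,Y],-[X,Y])$ into the normal space as well, so either the cohomogeneity is at least three or $X,Y$ span a two-dimensional, hence abelian, subalgebra. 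Otherwise $H$ is a proper subgroup of $\pi_1(H)\times\pi_2(H)$, which must then act with cohomogeneity one or transitively. The cohomogeneity-one case is excluded by a geometric argument on the quotient triangle (Proposition~\ref{PropCohOne}): irreducibility of the singular slice representations forces the $H$- and $H'$-orbits over the two $\pi/m$-vertices to coincide, and a generic regular point of the section cannot lie on a shortest horizontal geodesic between the two resulting singular $H'$-orbits. The transitive case is killed by Oni\v{s}\v{c}ik's classification (Table~\ref{TTrans}): $H$ proper in $\pi_1(H)\times\pi_2(H)$ forces $\h_1$ and $\h_2$ to share a nontrivial isomorphic ideal, which happens only for $\Spin(7)\times\SO(7)$ on $\SO(8)$, excluded by a dimension count. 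None of these steps appears in your proposal, so as it stands the key case remains unproved.
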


Note that we have to exclude the case of actions all of whose orbits are
zero-dimensional from the hypothesis of the theorem, since such actions are
always polar (the whole space is the section) and they would otherwise provide
technical counterexamples.

Theorem~\ref{ThMain} has already been proved in a number of special cases. The
first result in this connection was obtained by Br\"{u}ck~\cite{brueck}, who showed
that polar actions with a fixed point on irreducible Riemannian symmetric
spaces (of compact or non-compact type) are hyperpolar. Podest\`{a} and
Thorbergsson~\cite{pth2} have shown that polar actions on the complex quadric
$\SO(n{+}2)\, /\, \SO(n) \, \SO(2)$ are hyperpolar. Their method relies on the
fact that polar actions on Hermitian symmetric spaces are coisotropic, i.e.\
the normal spaces to principal orbits are totally real. (In fact, they have
classified the coisotropic actions on the complex quadric.) Using the same
method, this result has been generalized by Biliotti and Gori~\cite{bg} to the
complex Grassmannians and then by Biliotti~\cite{biliotti} to all compact
irreducible Hermitian symmetric spaces, leading Biliotti~\cite{biliotti} to
formulate the statement of Theorem~\ref{ThMain} as a conjecture. However, this
technique is not applicable to other (i.e.\ non-Hermitian) symmetric spaces and
different methods are needed to prove the conjecture in a more general setting.

In the special cases of symmetric spaces of Type~I, i.e.\ Riemannian symmetric
spaces whose isometry group is a simple compact Lie group, the first-named
author has obtained a complete classification of polar actions~\cite{polar},
proving Theorem~\ref{ThMain} in this special case. The other class of
irreducible compact Riemannian symmetric spaces are the symmetric spaces of
Type~II, i.e.\ compact Lie groups~$L$ with biinvariant metric. These spaces
have a non-simple isometry group; in fact, the connected component of the 
isometry group is covered by~$L \times L$. 
The fact that the isometry group is non-simple makes it considerably
more difficult to apply the methods of~\cite{polar} in the case of symmetric
spaces of Type~II. However, the first-named author could still prove
Theorem~\ref{ThMain} in the special case of exceptional compact Lie groups with
a biinvariant metric~\cite{exc}, again confirming the conjecture of Biliotti.

Recently, the second-named author obtained a result which generalizes
Theorem~\ref{ThMain} to the more general case of \emph{polar foliations}, where
the leaves of the foliation do not necessarily have to be homogeneous. He has
shown that polar (singular Riemannian) foliations of irreducible compact
symmetric spaces of higher rank are hyperpolar under the additional assumption
that the codimension of the foliation is at least three~\cite{lytchak}.
(See~\cite{th2} for a survey article on singular Riemannian foliations.) In
order to finally complete the proof of Theorem~\ref{ThMain}, it now suffices to
show that a polar action of cohomogeneity~two on a classical compact Lie
group~$L$ endowed with a biinvariant Riemannian metric is hyperpolar if
$\rank(L) \ge 2$. This is done in the present article by classifying all polar
cohomogeneity two actions on classical compact Lie groups with biinvariant
metric; all such actions turn out to be hyperpolar. See the last paragraph of
Section~\ref{SecCriterion} below for a summary of our proof.

Note that Theorem~\ref{ThMain} does not generalize directly to non-compact
symmetric spaces. In fact, there are counterexamples of polar actions with
non-flat sections on non-compact symmetric spaces of higher rank, see
\cite[Proposition~4.2]{bdrt}. However, an analogous statement as in
Theorem~\ref{ThMain} still holds for actions on non-compact irreducible spaces
if one requires the action to be given by a reductive algebraic subgroup of the
isometry group. (For example, semisimple subgroups and compact subgroups of a
semisimple Lie group are reductive algebraic subgroups, see~\cite{ov}.) Hence
we also obtain the following.

\begin{theorem}\label{ThMain2}
Let $M$ be an irreducible symmetric space. Let $H$ be a reductive algebraic
subgroup of the isometry group of~$M$. If the action of~$H$ on~$M$ is polar and
has orbits of positive dimension, then the action is hyperpolar or $M$ is of
rank one.
\end{theorem}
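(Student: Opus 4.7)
The plan is to reduce Theorem~\ref{ThMain2} to the compact case, which is already settled as Theorem~\ref{ThMain}, via the duality between compact and non-compact Riemannian symmetric spaces. If $M$ is compact, then any closed subgroup of its (compact) isometry group is automatically a reductive algebraic Lie group, so Theorem~\ref{ThMain} applies directly and yields the conclusion. Only the non-compact case requires new input, and the rank-one alternative in the conclusion allows us to assume throughout that $M$ has rank at least two.

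Suppose then that $M = G/K$ is a non-compact irreducible symmetric space of rank $\geq 2$, with $G$ the connected isometry group, $K$ a maximal compact subgroup, and Cartan decomposition $\g = \k + \p$. The compact dual is $M^* = G^*/K$ with $\g^* = \k + i\p \subset \g^{\C}$; it is irreducible and of the same rank as $M$. Since $H$ is a real reductive algebraic subgroup of $G$, we may assume, after conjugation, that the Cartan involution $\theta$ of $G$ preserves $H$, giving a compatible decomposition $\h = (\h \cap \k) + (\h \cap \p)$. Then $\h^* := (\h \cap \k) + i(\h \cap \p) \subset \g^*$ integrates to a compact subgroup $H^* \subset G^*$, the dual of $H$.

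The key technical step is to establish the following correspondence: the $H$-action on $M$ is polar (respectively hyperpolar) if and only if the $H^*$-action on $M^*$ is polar (respectively hyperpolar). The base-point isotropy subgroups $H \cap K$ and $H^* \cap K$ coincide, and their slice representations act on $T_o M = \p$ and on $T_o M^* = i\p$ via the canonical identification. A linear subspace $V \subset \p$ exponentiates to a totally geodesic submanifold of $M$, while $iV \subset i\p$ exponentiates to the dual totally geodesic submanifold of $M^*$; the two are flat simultaneously, since their curvature tensors differ only in sign. Combining this with Dadok's characterization of polar representations and the fact that polarity at a regular point is detected by the slice representation there, one transfers polar sections through $o$ between the two sides. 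Applying Theorem~\ref{ThMain} to the $H^*$-action on the compact, irreducible, higher-rank symmetric space $M^*$ shows it is hyperpolar, and duality then yields hyperpolarity of the $H$-action on $M$.

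The main obstacle is rigorously setting up the duality step, because the non-compact $H$-action need not be proper and its orbits may fail to be closed, so sections must be characterized intrinsically rather than as submanifolds meeting every orbit cleanly. The reductive algebraic hypothesis is precisely what permits this: it yields a Mostow-type Cartan decomposition of $H$ compatible with that of $G$ and controls the orbit structure through real algebraic invariant theory, for instance via Kempf--Ness theory or Luna's real slice theorem. The counterexamples cited as \cite[Proposition~4.2]{bdrt}, which produce polar actions with non-flat sections on non-compact higher-rank symmetric spaces when the acting group fails to be reductive algebraic, show that this hypothesis cannot be dropped.
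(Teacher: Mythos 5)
Your proposal follows the same route as the paper's own proof: reduce to the compact case (Theorem~\ref{ThMain}) via the duality between polar actions of reductive algebraic subgroups on a noncompact symmetric space and the corresponding actions on its compact dual. The paper simply quotes this duality step as \cite[Theorem~5.1]{dual}, whereas you sketch how it is established; the structure and substance of the argument are the same.
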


Theorem~\ref{ThMain2} follows immediately from Theorem~\ref{ThMain} and
\cite[Theorem~5.1]{dual}. It was shown in~\cite{dual} that hyperpolar actions
of reductive algebraic subgroups in the isometry group of irreducible symmetric
spaces are of cohomogeneity one or Hermann actions.


\section{Symmetric spaces of Type II and their isometry groups}
\label{SecTypeII}


Following \cite[Ch.\,X]{helgason}, there are two types of irreducible compact
Riemannian symmetric spaces. The spaces of \emph{Type}~I are those with simple
compact isometry group; the spaces of \emph{Type}~II are given by simple
compact Lie groups equipped with a biinvariant Riemannian metric. In the
following, we study isometric group actions of compact Lie groups on Riemannian
symmetric spaces of Type~II.

Let $L$ be a compact connected simple Lie group equipped with a biinvariant
Riemannian metric. Any such biinvariant metric is unique up to a constant
scaling factor. To study polar actions, the choice of this scaling factors is
of no relevance and we may thus assume $L$ is equipped with the biinvariant
metric induced by the negative of the Killing form. Then $L$ together with this
metric is a Riemannian symmetric space~\cite{helgason}; indeed, $L$ is a
Riemannian homogeneous space (it acts isometrically on itself by left and right
translations) and the inversion map $g \mapsto g^{-1}$ turns out to be an
isometric geodesic symmetry at the identity element $e$~of $L$.

The connected component of the isometry group of this symmetric space is
covered by $L \times L$, see~\cite{helgason}. To study (effective) polar
actions on $L$, it therefore suffices to consider closed connected subgroups of
$L \times L$ which act non-transitively. The following proposition shows that we
may distinguish between two types of maximal non-transitive subgroups of~$L
\times L$.

\begin{proposition}\label{PropMaxSubGr}
Let $H \subset L \times L$ be a connected closed subgroup acting
non-transitively on $L$. Then $H$ is contained in one of the following subgroups
of $L \times L$.
\begin{enumerate}

\item A diagonal subgroup of the form
\begin{equation}\label{EqDiagSubgroup}
\Delta^\sigma  L := \{ (\ell, \sigma(\ell)) \mid \ell \in L \}
\end{equation}
where $\sigma$ is an automorphism of~$L$.

\item A subgroup of the form
\begin{equation}\label{EqTimes}
H_1 \times H_2 := \{(h_1,h_2) \mid h_1 \in H_1,\, h_2 \in H_2 \},
\end{equation}
where $H_1$ and $H_2$ are closed connected proper subgroups of~$L$.

\end{enumerate}
\end{proposition}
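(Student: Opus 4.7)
The approach is a Goursat-style analysis of $H$ via the two factor projections $\pi_1,\pi_2\colon L\times L\to L$. Setting $L_i:=\pi_i(H)$, each $L_i$ is a closed connected subgroup of $L$, since $H$ is compact and connected. If both $L_1\neq L$ and $L_2\neq L$, then $H\subset L_1\times L_2$ with both factors proper, which immediately fits case~(ii); so the remaining work is under the assumption (swapping the two factors if necessary) that $\pi_1(H)=L$.

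Passing to the Lie algebra $\h\subset\l\oplus\l$ of $H$, I would introduce $\mathfrak{n}_1:=\{X\in\l\mid(X,0)\in\h\}$, which is easily seen to be an ideal of $\pi_1(\h)=\l$. Simplicity of $\l$ forces $\mathfrak{n}_1=0$ or $\mathfrak{n}_1=\l$. In the case $\mathfrak{n}_1=\l$ one has $\l\oplus 0\subset\h$, and subtracting such elements from an arbitrary $(X,Y)\in\h$ yields $(0,Y)\in\h$; hence $\h=\l\oplus\mathfrak{l}_2$ where $\mathfrak{l}_2:=\pi_2(\h)$, and therefore $H=L\times L_2$. But the orbit of the identity under this action equals $L\cdot L_2^{-1}=L$, so $H$ would act transitively, contradicting the hypothesis.

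Therefore $\mathfrak{n}_1=0$, so $\pi_2|_\h$ is injective. Combining this with the surjectivity of $\pi_1|_\h\colon\h\to\l$ and comparing dimensions forces $\dim\h=\dim\l$ and $\pi_2(\h)=\l$, so both projections restricted to $\h$ are Lie algebra isomorphisms. Consequently $\h=\{(X,\phi_*(X))\mid X\in\l\}$ where $\phi_*:=\pi_2\circ(\pi_1|_\h)^{-1}$ is a Lie algebra automorphism of $\l$. Integrating $\phi_*$ to an automorphism $\sigma$ of $L$ exhibits $\h$ as the Lie algebra of $\Delta^\sigma L$; since $H$ is closed connected with this Lie algebra, $H=\Delta^\sigma L$, placing us in case~(i).

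The main technical point is the final integration step: one needs $\phi_*$ to integrate to an automorphism of $L$ itself, not merely of its universal cover $\widetilde L$. If $L$ is simply connected this is automatic; in general one lifts $\phi_*$ to $\widetilde{\sigma}\in\Aut(\widetilde L)$ and verifies that $\widetilde\sigma$ preserves the finite central kernel of $\widetilde L\to L$, so that it descends to the desired $\sigma\in\Aut(L)$.
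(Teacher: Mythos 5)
Your Goursat-style analysis is a genuinely different route from the paper's own proof, which consists of a citation of Dynkin's classification of maximal subalgebras of semisimple Lie algebras (Theorem~15.1 of~\cite{dynkin1}). Up to and including the identification $\h=\{(X,\phi_*(X))\mid X\in\l\}$ your argument is correct and gives a clean, self-contained derivation of the Lie-algebra version of the dichotomy, which is what the paper actually needs (all later arguments, in particular Proposition~\ref{PropPolCrit} and Lemma~\ref{LmNoDiag}, work on the level of Lie algebras).

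The final descent step, however, contains a genuine gap: the verification you defer to the last paragraph --- that the lift $\widetilde\sigma\in\Aut(\widetilde L)$ of $\phi_*$ preserves the kernel $\Gamma$ of $\widetilde L\to L$ --- cannot in general be carried out, because it is false. Take $L=\SO(8)$, so $\widetilde L=\Spin(8)$ and $\Gamma=\{1,z\}$ with $z$ the kernel of the vector representation, and let $\phi_*$ be a triality automorphism of $\so(8)$. Its unique integral $\widetilde\sigma$ permutes the three order-two subgroups of $Z(\Spin(8))\cong\Z_2\times\Z_2$ nontrivially, so $\widetilde\sigma(\Gamma)\neq\Gamma$. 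The connected subgroup of $\SO(8)\times\SO(8)$ with Lie algebra $\{(X,\phi_*(X))\}$ is then $H=\{(p(g),p(\widetilde\sigma(g)))\mid g\in\Spin(8)\}$, where $p\colon\Spin(8)\to\SO(8)$ is the covering; it contains both $(e,e)$ and $(e,-I)=(p(z),p(\widetilde\sigma(z)))$, hence lies in no graph $\Delta^\sigma L$, while both factor projections are surjective, and it acts non-transitively since its isotropy group at $e$ contains the $14$-dimensional fixed point group $\LG_2$ of $\widetilde\sigma$. So your argument breaks down exactly at the point you flagged as ``the main technical point'' (and, read literally at the group level, the statement itself requires this same caveat for $\SO(8)$ and the half-spin groups). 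The correct conclusion of your analysis is the Lie-algebra statement you did establish: either $\h\subseteq\h_1\oplus\h_2$ with both summands proper, or $\h$ is the graph of an automorphism of $\l$; to get the group-level claim one must either restrict to those $L$ for which every automorphism of $\l$ descends to $L$, or be content with the Lie-algebra formulation, as the paper implicitly is.
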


\begin{proof}
Follows from~\cite[Theorem~15.1]{dynkin1}, see also
\cite[Section~2.1]{hyperpolar}.
\end{proof}


\section{Criterion for polarity}
\label{SecCriterion}


The following criterion for polarity of isometric actions on symmetric spaces
of the compact type is well known, for a proof see for
instance~\cite[Proposition~4.1]{polar}. By $H \cdot e K$ we denote the
$H$-orbit through the point $e K \in G/K$ and by $N_{e K}(H \cdot e K)$ its
normal space at the point $eK$.

\begin{proposition}\label{PropPolCrit}
Let $G$ be a connected semisimple compact Lie group, let $\sigma$ be an
involutive automorphism of~$G$ and let $K \subset G$ be a closed subgroup such
that $G^\sigma_o \subseteq K \subseteq G^\sigma$; in particular, $G/K$ is a
Riemannian symmetric space of the compact type. Let $\g = {\mathfrak k} \oplus
\p$ be the decomposition into eigenspaces of $d\sigma_e$ and identify
$T_{eK}G/K$ with $\p$ in the usual way. Let $H \subset G$ be a closed subgroup.
Assume the element $e K$ of~$G/K$ lies in a principal orbit of the $H$-action
on $G/K$. Then the following are equivalent.
\begin{enumerate}

\item The $H$-action on $G / K$ is polar with respect to any $G$-invariant
    Riemannian metric on~$G/K$.

\item The subspace $\nu := N_{e K}(H \cdot e K) \subseteq \p$ is a Lie
    triple system such that the Lie algebra $\nu \oplus [\nu, \nu]$
    generated by~$\nu$ is orthogonal to~$\h$ with respect to the negative 
    of the Killing form on~$\g$.

\end{enumerate}
In particular, the $H$-action on~$G/K$ is hyperpolar if and only if the
subspace $\nu$ is an abelian subalgebra of~$\p$.
\end{proposition}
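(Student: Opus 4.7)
The plan is to construct an explicit candidate section $\Sigma := \Exp(\nu)\cdot eK$, namely the image of $\nu\subseteq\p$ under the Riemannian exponential of $G/K$ at $eK$, and verify the three defining properties of a section---totally geodesic, meeting every $H$-orbit, meeting all orbits orthogonally---by translating them into Lie-algebraic conditions on $\nu$. For the totally geodesic property I invoke the classical correspondence in symmetric spaces: a subspace $\nu\subseteq\p$ exponentiates to a complete totally geodesic submanifold tangent to $\nu$ at $eK$ if and only if $[[\nu,\nu],\nu]\subseteq\nu$. For the direction (i)$\Rightarrow$(ii), the principality of $eK$ determines the tangent space of any section through $eK$ to be $\nu$; combined with the standard fact that sections of polar actions are totally geodesic, this forces $\nu$ to be a Lie triple system.

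The central step is the translation of pointwise orthogonality along $\Sigma$ into the single infinitesimal condition $(\nu\oplus[\nu,\nu])\perp\h$ with respect to the negative Killing form $B$. For $X\in\nu$ and $q:=\Exp(X)\cdot eK$, the identity $\Exp(tZ)\Exp(X)=\Exp(X)\Exp(t\,\Ad(\Exp(-X))Z)$ shows that the fundamental vector field of $Z\in\h$ at $q$ equals $dL_{\Exp(X)}\bigl(\pr_\p\Ad(\Exp(-X))Z\bigr)$, while parallel transport along the geodesic $t\mapsto\Exp(tX)\cdot eK$ in the totally geodesic $\Sigma$ yields $T_q\Sigma = dL_{\Exp(X)}(\nu)$. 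Since $L_{\Exp(X)}$ is an isometry and $B(\p,\k)=0$, orthogonality at $q$ reduces to $B(Y,\Ad(\Exp(-X))Z)=0$ for all $Y\in\nu$ and $Z\in\h$. Expanding $\Ad(\Exp(-X))=\sum_{n\geq0}(-1)^n\ad(X)^n/n!$ and using the $B$-skew-adjointness of $\ad(X)$, the orthogonality for $X$ varying in a neighborhood of $0$ in $\nu$ separates into $B(\ad(X)^nY,Z)=0$ for every $n\geq0$. Since $\nu$ is a Lie triple system, $\ad(X)^{2k}Y\in\nu$ and $\ad(X)^{2k+1}Y\in[\nu,\nu]$ for all $k\geq0$, so all these conditions reduce to the cases $n=0$ and $n=1$: the former gives $\nu\perp\h$, automatic from $\nu=(\pr_\p\h)^\perp\subseteq\p$ and $B(\p,\k)=0$; the latter gives precisely $[\nu,\nu]\perp\h$. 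This yields the equivalence (i)$\Leftrightarrow$(ii).

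To complete the direction (ii)$\Rightarrow$(i), I would verify that $\Sigma$ meets every $H$-orbit by a standard minimum-distance argument: for any $p\in G/K$, pick $q\in\overline{H\cdot p}$ minimizing the distance to $eK$; a minimizing geodesic from $eK$ to $q$ has initial velocity perpendicular to $T_{eK}(H\cdot eK)$, hence in $\nu$, so the geodesic stays in $\Sigma$ and $q\in\Sigma\cap\overline{H\cdot p}$. The hyperpolar addendum then follows from the curvature formula $R(X,Y)Z=-[[X,Y],Z]$ on a symmetric space: the induced metric on $\Sigma$ is flat exactly when $[\nu,\nu]=0$, i.e.\ when $\nu$ is an abelian subalgebra of $\p$, in which case both conditions in (ii) are trivially satisfied. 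The main obstacle is the middle step: passing from a finite Lie-bracket condition at $eK$ to orthogonality at every point of $\Sigma$ rests on the Lie triple system property, which truncates the $\Ad(\Exp(-X))$-expansion modulo $B$-duality to contributions from $\nu$ and $[\nu,\nu]$ alone.
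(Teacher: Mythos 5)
Your proposal is correct and takes essentially the same route as the paper's source: the paper itself gives no proof but defers to \cite[Proposition~4.1]{polar}, and the argument there is precisely yours --- exponentiate $\nu$ to a candidate totally geodesic section, express the orbit tangent spaces along it via $\Ad(\Exp(-X))$, and use the $\ad$-skew-symmetry of the Killing form together with the Lie triple system property to reduce the pointwise orthogonality to the two conditions $\nu\perp\h$ and $[\nu,\nu]\perp\h$. All the supporting steps (the Lie-triple-system characterization of totally geodesic submanifolds, the minimum-distance argument that the section meets every orbit, and the curvature identity for the hyperpolar addendum) are the standard ones and are used correctly.
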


The proposition implies that the polarity of an action is determined on the Lie
algebra level. The Lie triple system~$\nu$ which appears in the proposition
corresponds to the tangent space of a section in case of a polar action. (Note
that sections of polar actions are always totally geodesic
submanifolds~\cite{pt}.)

In this article, we will apply Proposition~\ref{PropPolCrit} exclusively to the
special case of a Riemannian symmetric space of Type~II. To this end, we use
the homogeneous presentation~$G/K$ of~$L$ where we define $G := L \times L$ and
where the isotropy subgroup~$K = G_e$ of the identity element~$e \in L$ for the
action (\ref{EqLRAction}) is the diagonal subgroup $\{ (\ell, \ell) \mid \ell
\in L \}$ of $L \times L$. In this case the spaces~$\mathfrak k$ and $\p$ as
defined in Proposition~\ref{PropPolCrit} are of the following form
\begin{equation}\label{EqAntiDiag}
\mathfrak k = \{ (X,X) \mid X \in \mathfrak{l} \}
\end{equation}
and
\begin{equation}\label{EqAntiDiag}
\p = \{ (X,-X) \mid X \in \mathfrak{l} \},
\end{equation}
where $\mathfrak{l}$ denotes the Lie algebra of~$L$.

Henceforth we will assume that $L$ is a simple compact Lie group of rank greater
than one and $H \subset L \times L$ is a closed connected subgroup which acts
polarly with non-flat sections and with cohomogeneity two on~$L$ by the
restriction of~(\ref{EqLRAction}) to~$H$. We will prove that no such subgroup
exists.

Our proof proceeds as follows. By Proposition~\ref{PropMaxSubGr} we may assume
that the group $H$ is contained in a group of the form~(\ref{EqDiagSubgroup})
or of the form~(\ref{EqTimes}). The first case has already been treated
in~\cite{exc} and hence may assume that $H$ is contained in a group of the
form~(\ref{EqTimes}). In Lemma~\ref{LmPolDiag} we show that if $H$ is of the
special form $H=\pi_1(H) \times \pi_2(H)$, where $\pi_1$ and $\pi_2$ denote the
canonical projections on the first or second factor of~$L \times L$, then the
action is hyperpolar. Thus we may assume in what follows that $H$ is a proper
subgroup of $\pi_1(H) \times \pi_2(H)$. Moreover, since the action of $\pi_1(H)
\times \pi_2(H)$ is not orbit equivalent to the $H$-action, it is either of
cohomogeneity one or transitive. However, in Section~\ref{SecSubCohOne} we show
that subactions of cohomogeneity one actions cannot be polar, ruling out the
first alternative. It remains to consider the case where $\pi_1(H) \times
\pi_2(H)$ acts transitively on $L$ by~(\ref{EqLRAction}) and where both
$\pi_1(H)$ and $\pi_2(H)$ are proper subgroups of~$L$. Using the classification
of transitive actions~\cite{oniscik}, it is shown in Section~\ref{SecSubTrans}
that there is no such action, completing the proof of Theorem~\ref{ThMain}.


\section{Subactions of $\sigma$-actions}
\label{SecSubSigma}


We start our classification with actions by (closed subgroups of) diagonal
subgroups in~$L$.

\begin{lemma}\label{LmNoDiag}
Let $L$ be a simple compact Lie group and let $\sigma$ be an automorphism
of~$L$. If a closed connected subgroup~$H$ of $\Delta^\sigma L$ acts polarly on
the simple compact connected Lie group~$L$ of rank greater than one, then $H =
\Delta^\sigma L$ or $H = \{e\}$; in particular, the action of~$H$ on~$L$ is
hyperpolar or trivial.
\end{lemma}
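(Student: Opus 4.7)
The plan is to apply Proposition~\ref{PropPolCrit} to the Type~II presentation $L = G/K$ with $G = L\times L$ and $K = \Delta L$. A closed connected subgroup $H\subseteq \Delta^\sigma L$ has the form $H=\{(\ell,\sigma(\ell)) : \ell\in H_0\}$ for a unique closed connected subgroup $H_0\subseteq L$, and the induced action is the $\sigma$-twisted conjugation $\ell\cdot m = \ell m \sigma(\ell)^{-1}$. I aim to show that polarity of this action forces $H_0 = L$ or $H_0 = \{e\}$.

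The first step is to reduce to the case $\sigma = \id$ whenever $\sigma$ is inner. If $\sigma = \Ad(g)$, then right-translation by $g^{-1}$ is an isometry of $L$ intertwining the $\sigma$-twisted conjugation by $H_0$ with the untwisted conjugation by $H_0$, so one may assume $\sigma = \id$. In that case the identity $e\in L$ is a fixed point of the $H_0$-action, and the slice representation at $e$ is the adjoint representation $\Ad(H_0)$ on $\mathfrak{l}$. Since slice representations of polar actions are polar, this reduces the problem to classifying when $\Ad(H_0)$ is a polar representation of $H_0$ on the simple Lie algebra $\mathfrak{l}$. Using Dadok's theorem~\cite{dadok} together with the decomposition $\mathfrak{l}=\h_0\oplus\m$ as an $H_0$-module, one verifies case by case that the only possibilities when $L$ is simple of rank at least two are $H_0 = L$ and $H_0 = \{e\}$.

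For outer $\sigma$ the $H$-action need not have a fixed point on $L$, so I would instead apply Proposition~\ref{PropPolCrit} at a principal point $m\in L$. Under the identification $\p \cong \mathfrak{l}$ via $(X,-X)\leftrightarrow X$ and after translation to the identity coset, the tangent space of $H\cdot m$ becomes $(\id - \Ad(m)\circ d\sigma)(\h_0)\subset\mathfrak{l}$, and its Killing-orthogonal complement $\nu\subset\mathfrak{l}$ must be a Lie triple system satisfying $\nu\oplus[\nu,\nu]\perp\h$. Since the full $\Delta^\sigma L$-action is hyperpolar, $\nu$ contains an abelian ``twisted Cartan'' subspace $\La$ corresponding to the flat section of the full action; because $H_0\subsetneq L$, $\nu$ strictly contains $\La$. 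Expanding triple brackets $[[X,A],A']$ for $X\in\nu\setminus\La$ and $A,A'\in\La$, combined with the orthogonality condition on $\h$, yields rigid algebraic constraints that are meant to force $\h_0 = 0$.

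The main obstacle will be the outer-automorphism case: the relevant Cartan-like subspace $\La$ sits inside a twisted subspace of $\mathfrak{l}$, and its associated root structure depends delicately on the conjugacy class of $\sigma$ (types $\LA_n$, $\LD_n$, or $\LE_6$). A cleaner uniform route might be to embed the twisted problem into an untwisted one via the semidirect product $\hat L = L\rtimes\langle\sigma\rangle$: the $\sigma$-twisted conjugation of $L$ on itself is equivalent to the ordinary conjugation action of $L$ on the coset $L\sigma\subset\hat L$, which is a totally geodesic submanifold of $\hat L$ for a biinvariant metric. A polar conjugation action of $H_0$ on this coset extends to a polar conjugation action on a neighborhood of $L\sigma$ in $\hat L$, reducing the problem to the inner case treated above.
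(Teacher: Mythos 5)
You should first be aware that the paper does not prove this lemma at all: its ``proof'' is the single citation \cite[Proposition~12]{exc}, so any self-contained argument is doing strictly more than the text you were asked to match. Your opening reduction is fine: $H=\{(\ell,\sigma(\ell)):\ell\in H_0\}$, the action is $\sigma$-twisted conjugation, and for inner $\sigma=\Ad(g)$ a right translation intertwines it with ordinary conjugation by $H_0$. The slice-representation argument at the fixed point $e$ is also legitimate (slice representations of polar actions are polar, and at a fixed point the slice representation is all of $\Ad(H_0)$ on $\mathfrak l$). But at that point you have only \emph{restated} the problem: the assertion that ``one verifies case by case'' that no proper nontrivial closed connected $H_0\subsetneq L$ has $\Ad(H_0)$ polar on $\mathfrak l$ is exactly the mathematical content of the lemma in the inner case, and it is a genuinely nontrivial classification (one must rule out, e.g., maximal tori, principal three-dimensional subgroups, symmetric subgroups $L^\tau$, etc., for every classical and exceptional $L$ of rank at least two). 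Asserting it without an argument or a precise reference leaves the main step unproved.

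The outer case is where the proposal actually breaks. Your first route ends with constraints that ``are meant to force $\h_0=0$'', which is an admission that the computation was not carried out. Your proposed cleaner route via $\hat L=L\rtimes\langle\sigma\rangle$ does not achieve the stated reduction: the coset $L\sigma$ is a connected component of $\hat L$, hence open and closed, so ``a neighborhood of $L\sigma$ in $\hat L$'' is just $L\sigma$ itself and no extension into the identity component is obtained. More importantly, the inner-case argument you want to invoke rests on the existence of a fixed point ($e$ for ordinary conjugation), and the conjugation action of $H_0$ on the coset $L\sigma$ has a fixed point $m\sigma$ only when $\sigma|_{H_0}=\Ad(m^{-1})|_{H_0}$, which fails for general $H_0$ and outer $\sigma$; so even granting the identification of twisted conjugation with conjugation on $L\sigma$, the slice-representation-at-a-fixed-point mechanism is simply not available. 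As it stands the outer case is unproved, and the inner case reduces the lemma to an unverified classification; the honest fix is either to carry out that classification (essentially reproving \cite[Proposition~12]{exc}) or to cite it, as the paper does.
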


\begin{proof}
\cite[Proposition~12]{exc}.
\end{proof}

Note that the action of $\Delta^\sigma L$ on~$L$ is well known to be
hyperpolar. These actions were named \emph{$\sigma$-actions} in~\cite{hptt}.
From now on, it suffices to consider those subgroups of $L \times L$ which are
contained in groups of the form (\ref{EqTimes}). We will first look at a
special case.


\section{Groups without diagonal factors}
\label{SecNoDiagFactors}


In this section we consider the special case where the subgroup $H \subset L
\times L$ is such that $H = \pi_1(H) \times \pi_2(H)$. It is more or less an
immediate consequence of Proposition~\ref{PropPolCrit} that such an action is
hyperpolar.

\begin{lemma}\label{LmPolDiag}
Let $L$ be a simple compact Lie group and let $H_1, H_2 \subset L$ be closed
subgroups. Assume that the subgroup
\begin{equation}\label{EqSplitGrp}
H_1 \times H_2 = \{\, (h_1,h_2) \mid h_1 \in H_1,\, h_2 \in H_2\, \} \subseteq L \times L
\end{equation}
acts polarly and with cohomogeneity two on~$L$ by the restriction of the
action~(\ref{EqLRAction}). Then the $H_1 \times H_2$-action on $L$ is
hyperpolar.
\end{lemma}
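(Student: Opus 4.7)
The strategy is to apply Proposition~\ref{PropPolCrit} with $G = L \times L$ and $K = \Delta L$, and to show that in this very rigid product setup the normal space $\nu$ is forced to be a Lie subalgebra of $\l$, which for dimension reasons must then be abelian.

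First, after replacing $H_2$ by a suitable conjugate $\ell H_2 \ell^{-1}$ in $L$ (which preserves the product form $H_1 \times H_2$ and produces an isometrically conjugate action), we may assume that $e \in L$ lies on a principal orbit. Using the identification of $\p = \{(X,-X) : X \in \l\}$ with $\l$ via $(X,-X) \mapsto X$, the tangent space at $e$ to the orbit $H_1 \, e \, H_2^{-1}$ corresponds to the subspace $\h_1 + \h_2 \subseteq \l$, so
\[
\nu \;\cong\; (\h_1 + \h_2)^\perp \;\subseteq\; \l,
\]
where the orthogonal complement is taken with respect to the negative of the Killing form $B$ of $\l$. The cohomogeneity-two hypothesis gives $\dim \nu = 2$.

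The main step is to extract the subalgebra property of $\nu$ from the orthogonality condition in Proposition~\ref{PropPolCrit}. The Killing form of $\g = \l \oplus \l$ is the sum $B \oplus B$. Inside $\g$, $\nu$ sits as $\{(X,-X) : X \in \nu\}$ and is automatically Killing-orthogonal to $\h_1 \oplus \h_2$ by construction of $\nu$; the bracket $[\nu, \nu]$ sits in $\k$ as $\{([X,Y], [X,Y]) : X,Y \in \nu\}$, and pairing with an arbitrary $(Y_1, Y_2) \in \h_1 \oplus \h_2$ yields
\[
B\bigl([X,Y],\, Y_1\bigr) + B\bigl([X,Y],\, Y_2\bigr) \;=\; B\bigl([X,Y],\, Y_1 + Y_2\bigr).
\]
Requiring this to vanish for all such $Y_1, Y_2$ forces $[\nu, \nu] \subseteq (\h_1 + \h_2)^\perp = \nu$. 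Hence $\nu$ is a two-dimensional Lie subalgebra of the compact Lie algebra $\l$.

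To conclude, I use the fact that every two-dimensional subalgebra of a compact Lie algebra is abelian: otherwise $\nu$ would be isomorphic to the nonabelian solvable algebra with basis $X, Y$ and $[X,Y] = Y$, but then $\ad_\l(X)$ would have the real eigenvalue $1$, contradicting its skew-symmetry with respect to the positive-definite inner product $-B$. Therefore $\nu$ is abelian, and by the final clause of Proposition~\ref{PropPolCrit} the action is hyperpolar. The only point requiring mild care is keeping the product decomposition $\h = \h_1 \oplus \h_2$ tidy when unpacking the Killing-form orthogonality; no further structure theory is needed.
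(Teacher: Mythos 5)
Your proof is correct and follows essentially the same route as the paper's: both exploit the product form $\h=\h_1\oplus\h_2$ to split the Killing-orthogonality of $[\nu,\nu]\subseteq\k$ to $\h$ into orthogonality to each factor, concluding that $[\nu,\nu]$ lands back in $\nu=(\h_1+\h_2)^\perp$, so that $\nu$ is a two-dimensional subalgebra of the compact Lie algebra $\l$ and hence abelian. The only cosmetic difference is the final step, where you rule out a nonabelian two-dimensional subalgebra via the skew-symmetry of $\ad$, while the paper invokes the reductive decomposition of subalgebras of compact Lie algebras; both are standard and equally valid.
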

\begin{proof}
We may assume that the identity element $e \in L$ lies in a principal orbit by
replacing $H$ with a suitable conjugate group, if necessary. Let $\nu$ be the
normal space at the point~$e$ to the $H$-orbit through~$e$ as defined in
Proposition~\ref{PropPolCrit}~(ii). By the hypothesis, the $H$-action is of
cohomogeneity two, hence there are two linearly independent vectors $v,w \in
\nu$. Furthermore, since the $H$-action is polar, it follows from
Proposition~\ref{PropPolCrit} that $[v,w]$ is orthogonal to~$\h$. Since the
elements of~$\p$ (and hence of~$\nu$) are of the special
form~(\ref{EqAntiDiag}), we have $v = (X,-X)$, $w = (Y,-Y)$ for some vectors
$X,Y \in \mathfrak{l}$. Consequently, we have $[v,w] = [(X,-X),(Y,-Y)] =
([X,Y],[X,Y])$. By our hypothesis, the group $H$ is of the special
form~(\ref{EqSplitGrp}) and hence an element $(A,B) \in \mathfrak{l} \times
\mathfrak{l}$ is orthogonal to~$\h$ if and only if $A$ is orthogonal to~$\h_1$
and $B$ is orthogonal to~$\h_2$. In particular, since the vector $[v,w] =
([X,Y],[X,Y])$ is orthogonal to~$\h$, we also have that the vector
$([X,Y],-[X,Y]) \in \p$ is orthogonal to~$\h$ and hence it is contained in the
normal space~$\nu$.

Assume now the three vectors $X$, $Y$ and $[X,Y]$ in~$\mathfrak{l}$ are
linearly independent. Then it follows that the three vectors $(X,-X)$,
$(Y,-Y)$, $([X,Y],-[X,Y])$ in~$\nu$ are linearly independent, too. Since $\nu$ is
the normal space at a principal orbit, it follows that the cohomogeneity of the
$H$-action is at least three, contradicting our hypothesis.

Thus it follows that the bracket~$[X,Y]$ is contained in the span of~$X$
and~$Y$. Hence the vectors $X$ and $Y$ span a two-dimensional subalgebra
of~$\mathfrak{l}$. However, the Lie group $L$ is compact and hence any
subalgebra of $\mathfrak{l}$ is reductive. In particular, any subalgebra
of~$\mathfrak{l}$ decomposes as a direct sum of an abelian and a semisimple Lie
algebra. By the classification of semisimple real Lie algebras, it is known
that any simple Lie algebra (and hence any non-trivial semisimple Lie algebra)
is at least of dimension three. It follows that the Lie algebra spanned by $X$
and $Y$ is abelian and Proposition~\ref{PropPolCrit} implies that the action is
hyperpolar.
\end{proof}

As a consequence of Lemmas~\ref{LmNoDiag} and \ref{LmPolDiag} we obtain the
following.

\begin{lemma}
Let $L$ be a simple compact Lie group and assume $H \subset L \times L$ is a
closed connected subgroup whose action on~$L$ is of cohomogeneity two and polar
with a non-flat section. Then there are closed connected proper subgroups $H_1$
and $H_2$ of~$L$ such that $H$ is contained in $H_1 \times H_2$ and such that
the action of $H_1 \times H_2$ on~$L$ given by $(h_1,h_2) \cdot \ell := h_1 \,
\ell \, h_2^{-1}$ is of cohomogeneity one or transitive.
\end{lemma}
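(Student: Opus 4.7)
The plan is to apply Proposition~\ref{PropMaxSubGr} to $H$, rule out the diagonal alternative via Lemma~\ref{LmNoDiag}, set $H_1 := \pi_1(H)$ and $H_2 := \pi_2(H)$, and then exclude the possibility that $H_1 \times H_2$ itself still acts with cohomogeneity two by combining an orbit-equivalence argument with Lemma~\ref{LmPolDiag}.

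First I would invoke Proposition~\ref{PropMaxSubGr}: either $H \subseteq \Delta^\sigma L$ for some automorphism $\sigma$ of $L$, or $H \subseteq H_1' \times H_2'$ for proper closed connected subgroups $H_1', H_2' \subset L$. Since $L$ has rank at least two by the standing assumption, Lemma~\ref{LmNoDiag} shows that in the diagonal case $H$ is either trivial or all of $\Delta^\sigma L$: the former cannot yield cohomogeneity two since $\dim L \geq 3$, and the latter is a $\sigma$-action, hence hyperpolar, contradicting the non-flat section hypothesis. So the product alternative holds, and I set $H_1 := \pi_1(H)$, $H_2 := \pi_2(H)$. These are continuous images of the compact connected group $H$, so closed and connected, and they are proper subgroups since $H_i \subseteq H_i'$.

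It remains to verify that $H_1 \times H_2$ acts on $L$ with cohomogeneity at most one. Since $H \subseteq H_1 \times H_2$, every $H$-orbit is contained in an $H_1 \times H_2$-orbit, so the cohomogeneity of $H_1 \times H_2$ is at most two. Suppose for contradiction that it equals two, and pick a point $p \in L$ in the intersection of the (open, dense) principal strata of both actions. Then the $H$-orbit and the $H_1 \times H_2$-orbit through $p$ are connected submanifolds of the same dimension $\dim L - 2$, one contained in the other, so they coincide. Consequently the two actions share the same principal-orbit foliation near $p$, so they share the same normal space $\nu_p$ in the sense of Proposition~\ref{PropPolCrit}; since polarity and flatness of the section depend only on this orbit foliation through a principal point, the $H_1 \times H_2$-action is also polar with a non-flat section and of cohomogeneity two, directly contradicting Lemma~\ref{LmPolDiag}.

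The only delicate step is the orbit-equivalence argument: one must choose $p$ to lie simultaneously in the principal stratum of \emph{both} actions (which is possible because both strata are open and dense in $L$), so that the two orbits through $p$ really do match as submanifolds and the section data of $H$ transfers unchanged to $H_1 \times H_2$.
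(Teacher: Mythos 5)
Your proof is correct and follows essentially the same route as the paper's: define $H_1=\pi_1(H)$ and $H_2=\pi_2(H)$, use Proposition~\ref{PropMaxSubGr} together with Lemma~\ref{LmNoDiag} to dispose of the diagonal alternative and obtain properness of $H_1,H_2$, and rule out cohomogeneity two for $H_1\times H_2$ via orbit equivalence and Lemma~\ref{LmPolDiag}. The only cosmetic difference is that you invoke the dichotomy of Proposition~\ref{PropMaxSubGr} before defining $H_1,H_2$, whereas the paper defines them first and deduces containment in a diagonal subgroup from $H_i=L$.
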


\begin{proof}
Define $H_1$ and $H_2$ to be the images of~$H$ under the natural projections
onto the first and second factors of $L \times L$. Then $H_1$ and $H_2$ are
compact connected subgroups of~$L$ and $H$ is contained in $H_1 \times H_2$. If
$H_1 = L$ or $H_2 = L$, then $\h$ contains an ideal isomorphic to
$\mathfrak{l}$ and it follows from Lemma~\ref{PropMaxSubGr} that $H$ is
contained in a subgroup of the form~(\ref{EqDiagSubgroup}). Hence it follows
from Lemma~\ref{LmNoDiag} that the $H$-action on~$L$ is hyperpolar, a
contradiction. Thus $H_1$ and $H_2$ are proper subgroups of $L$. Since we have
$H \subseteq H_1 \times H_2$, it follows that the $H_1 \times H_2$-action
on~$L$ is cohomogeneity less or equal to two. If the cohomogeneity is two, then
the $H$-action and the $H_1 \times H_2$-action on~$L$ are orbit equivalent,
leading to a contradiction, since then by Lemma~\ref{LmPolDiag} the $H_1 \times
H_2$-action is hyperpolar. Thus the $H_1 \times H_2$-action on $L$ is of
cohomogeneity one or transitive.
\end{proof}

From now on, we may assume that the $H$-action on $L$ is a subaction of a
transitive or cohomogeneity one action given by a subgroup of the
form~(\ref{EqTimes}). (By the term \emph{subaction} we refer to an action which
is given by restricting a Lie group action to a closed subgroup.) We will prove
in the next two sections that no such subaction exists.


\section{Subactions of cohomogeneity one actions}
\label{SecSubCohOne}


\begin{proposition}\label{PropCohOne}
Let $M$ be a compact irreducible symmetric space of higher rank with a polar
and not hyperpolar action of a group~$H$  of cohomogeneity two. Then $H$ is not
a subgroup of a larger group~$H'$ which acts on~$M$ isometrically and with
cohomogeneity one.
\end{proposition}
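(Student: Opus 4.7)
The plan is to argue by contradiction. Suppose there exists a connected closed subgroup $H'\subseteq\Isom(M)$ containing $H$ and acting isometrically on $M$ with cohomogeneity one. Write $M=G/K$ with $G$ the identity component of the isometry group and $K$ the stabilizer of a point $p$, and identify $T_pM\cong\p$ via the Cartan decomposition $\g=\k\oplus\p$. Choose $p$ principal for both the $H$- and the $H'$-action. Let $\nu=T_p\Sigma\subseteq\p$ be the two-dimensional normal space to $H\cdot p$, and let $\nu'\subseteq\p$ be the one-dimensional normal space to $H'\cdot p$. Since $\h\subseteq\h'$ we have $\pi_\p(\h)\subseteq\pi_\p(\h')$, so $\nu'\subseteq\nu$; choose unit vectors $v\in\nu'$ and $w\in\nu\cap(\nu')^\perp$, so that $w\in\pi_\p(\h')$ and $w\perp\h$.

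Next, I would apply Proposition~\ref{PropPolCrit} to translate both polarity hypotheses into Lie-algebraic conditions. For $H'$ the action is automatically hyperpolar, and the criterion reduces to $v\perp\h'$ with respect to the negative of the Killing form on $\g$ (using $\k\perp\p$). For $H$, the criterion says $\nu$ is a Lie triple system and $\nu\oplus[\nu,\nu]$ is orthogonal to $\h$. Since $\Sigma$ is a two-dimensional totally geodesic non-flat surface in a compact symmetric space, it has constant positive curvature $\kappa>0$, and $z:=[v,w]\in\k\setminus\{0\}$ satisfies $[v,z]=\kappa w$ and $[w,z]=-\kappa v$. In particular $\ss:=\R v+\R w+\R z$ is a subalgebra of $\g$ isomorphic to $\so(3)$, and polarity of $H$ gives $z\perp\h$.

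I would then extract algebraic consequences from $v\perp\h'$. By invariance of the Killing form, $v\perp\h'$ immediately forces $[v,\h']\perp\h'$. Picking $X\in\h'$ with $\pi_\p(X)=w$ and writing $X=X_\k+w$ with $X_\k\in\k$, the bracket $[v,X]=[v,X_\k]+z$ lies in $(\h')^\perp$; comparing $\k$- and $\p$-components yields $z\perp\h'$ and $[v,X_\k]\in\pi_\p(\h')^\perp=\R v$. So $z$ is orthogonal to the larger algebra $\h'$, not merely to $\h$.

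The hard part is then to derive a contradiction from these algebraic ingredients together with the properness of $\h\subsetneq\h'$. The plan is to exploit the totally geodesic sphere $\Sigma\cong S^2$, which is the orbit through $p$ of the connected subgroup $S\subseteq G$ integrating $\ss$: restricting the $H'$-orbit foliation to $\Sigma$ yields a codimension-one foliation whose leaves meet the closed geodesic $\gamma_v\subseteq\Sigma$ orthogonally, and the parallelism of $H'$-orbits along $\gamma_v$ (characteristic of hyperpolar actions) should force these leaves to be a pencil of great circles through two fixed antipodal points of $\Sigma$, with $\gamma_v$ as perpendicular equator. Combining this rigid structure with the $\so(3)$-subalgebra $\ss$ and with the $H$-Weyl-group action on $\Sigma$ should force the projections $\pi_\p(\h)$ and $\pi_\p(\h')$ to coincide, contradicting the codimension-one strict inclusion $\pi_\p(\h)\subsetneq\pi_\p(\h')$ imposed by the cohomogeneities. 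The main obstacle is making this rigidity argument fully precise; this may require a careful analysis of the second fundamental form of the principal $H'$-orbits along $\Sigma$, or an auxiliary appeal to the classification of cohomogeneity one actions on irreducible compact symmetric spaces of higher rank.
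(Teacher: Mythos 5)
Your proposal sets up the linear algebra correctly (the inclusion $\nu'\subseteq\nu$, the $\so(3)$-subalgebra spanned by $v$, $w$, $[v,w]$ coming from the constant positive curvature of $\Sigma$, and the orthogonality $[v,w]\perp\h'$), but none of this reaches a contradiction: the entire burden is shifted to the final ``rigidity'' step, which you leave as a plan and which, as sketched, does not work. The trace of the $H'$-orbits on $\Sigma$ is not a foliation by circles that you can control: $\Sigma$ is a section for $H$ but not for $H'$, each $H'$-orbit meets $\Sigma$ in a $W$-invariant union of $H$-orbit traces, and there is no a priori ``parallelism of $H'$-orbits along $\gamma_v$'' to invoke at this stage. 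Moreover, the contradiction you aim for ($\pi_\p(\h)=\pi_\p(\h')$) is a statement about tangent spaces at the single point $p$, and nothing in your infinitesimal data rules out $\h\subsetneq\h'$ with $\pi_\p(\h)$ of codimension one in $\pi_\p(\h')$ --- this configuration does occur for genuine cohomogeneity-two subactions of cohomogeneity-one actions; what has to be used is that the $H$-action is polar with \emph{non-flat} section globally, not just at one point. So the proposal is a genuinely incomplete argument, not a proof.

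The paper's proof runs along entirely different lines, through the orbit space rather than the tangent space. Since the $H$-action is polar of cohomogeneity two with non-flat section, $\Delta=M/H$ is the quotient of $S^2$ by a finite Coxeter group $W$; the case of reducible $W$ is excluded by \cite[Section~7]{lytchak} (this is where higher rank enters), so $\Delta$ is a spherical triangle with two vertices $x,y$ of angle $\pi/m$, $m>2$. At a point $p$ over such a vertex the slice representation of $H_p$ on the normal space $V$ has quotient a cone of angle $\pi/m<\pi/2$, hence is irreducible; therefore $H'_p$ preserves no proper nontrivial subspace of $V$, forcing $H'\cdot p=H\cdot p$, and likewise at $y$. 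These are then the only singular orbits of the cohomogeneity-one $H'$-action, so every regular point lies on a shortest $H'$-horizontal geodesic between them; for a regular point $o\in\Sigma$ such a geodesic is also $H$-horizontal, hence contained in $\Sigma$, which fails for generic $o$ in the two-dimensional $\Sigma$. If you want to complete your own approach you would essentially have to rediscover this orbit-space argument (or appeal to a classification of cohomogeneity-one actions, which the paper deliberately avoids here); the infinitesimal identities you derived are not sufficient on their own.
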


\begin{proof}
Assume that  such a group $H'$ does  exist. We may assume that $H$ and $H'$ are
connected.  Denote by $\Delta$ the quotient space $M/H$.  By polarity of the
action of~$H$, $\Delta$ is a finite quotient of a section~$\Sigma$, which is a
totally geodesic submanifold of~$M$ and hence a two-dimensional symmetric
space. By assumption $\Sigma $ is non-flat. Thus it is  either the round sphere
or the round projective space, which, after rescaling, may be assumed to be of
constant curvature one. In any case, $\Delta $ is the quotient of the universal
covering $S^2$ of~$\Sigma$ by a finite Coxeter group~$W$.

In \cite[Section~7]{lytchak} it is shown that if the Coxeter group~$W$ is
reducible, the space $M$ must be of rank one. Thus we may assume that $W$ is
irreducible. Then $\Delta $ is a spherical triangle with angles $\pi /2, \pi/3,
\pi/m$, with $m=3$ or $m=4$. (In fact, the case $m=3$ can be excluded using the
ideas of \cite{lytchak}, cf.\ \cite{fgt}, but this will not be used in the
sequel).  Hence, the triangle $\Delta$  has two vertices $x$ and $y$ such that the
angles at these points are equal to $\pi /m$ with (possibly different) $m>2$.

Let $p \in M$ be any point in the $H$-orbit corresponding to~$x$.  Then the
isotropy group $H_p$  acts on the normal space $V$ to the orbit $H\cdot p$ with
cohomogeneity two, such that the quotient $V/H_p$ is the tangent space
to~$\Delta$ at~$x$, i.e.\ the cone over the interval of length~$\pi /m$. Hence
the action of~$H_p$ on $V$ is irreducible. Therefore, the larger isotropy group
$H' _p$ of~$H'$ at $p$ cannot act on a proper non-trivial subspace of~$V$. It
follows that the normal space to the orbit  $H' \cdot p$ at $p$ coincides with
$V$. Thus the orbits  $H' \cdot p$ and $H\cdot p$ coincide.

The same argument works for a point~$q$ over the vertex~$y$.   Thus we deduce
that the orbits of~$H$ and of~$H'$ through $p$ and $q$ coincide.  Therefore, $H
\cdot p$ and $H \cdot q$  are the only singular orbits  of the
cohomogeneity-one-action of~$H'$ on~$M$.

Take  any regular point~$o$  in the section~$\Sigma $  from above.  Then this
point $o$ must lie on a shortest $H'$-horizontal geodesic from $H\cdot  p$ to
$H \cdot q$. This geodesic is also $H$-horizontal, hence contained in~$\Sigma$.
For a generic choice of~$o$ in~$\Sigma$, we obtain a contradiction.
\end{proof}

\begin{remark}
If, under the assumptions of Proposition~\ref{PropCohOne}, the space~$M$ is
reducible and $H'$ acts with cohomogeneity one, then (possibly after enlarging
the group~$H$ without changing the orbit equivalence class) we are in the
following situation. The space $M$ splits as $M=M_1 \times M_2$ and $H$ splits
as $H_1\times H_2$, such that $H_i$ acts trivially on $M_{2-i}$. Moreover,
$H_2$ acts transitively on $M_2$ and $M_1$ is of rank one, see
\cite[Section~7]{lytchak}.
\end{remark}


\section{Subactions of transitive actions}
\label{SecSubTrans}


\begin{lemma}\label{LmSubCohOne}
Let $L$ be a simple compact Lie group of rank greater than one. Then there is
no closed subgroup $H \subset L \times L$ such that the $H$-action on~$L$ is
polar of cohomogeneity two with a non-flat section and such that the action of
$\pi_1(H) \times \pi_2(H)$ on~$L$ is transitive.
\end{lemma}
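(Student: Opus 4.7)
The plan is to argue by contradiction: suppose such an $H$ exists, and set $H_i:=\pi_i(H)$, a closed connected proper subgroup of $L$. Transitivity of $H_1\times H_2$ on $L$ by~(\ref{EqLRAction}) is precisely the factorization condition $L=H_1 H_2$, and Onishchik's classification~\cite{oniscik} of such factorizations of simple compact Lie groups of rank at least two reduces the problem to a short, explicit list of triples $(L,H_1,H_2)$. This list comprises the classical families such as $\SU(2n)=\Sp(n)\cdot\SU(2n-1)$, $\SO(2n)=\SO(2n-1)\cdot\SU(n)$ and $\SO(4n)=\SO(4n-1)\cdot\Sp(n)\Sp(1)$, together with the sporadic factorizations of $\SO(7)$, $\SO(8)$ and $\SO(16)$ involving $G_2$, $\Spin(7)$ and $\Spin(9)$.

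Next I would translate cohomogeneity and polarity into algebraic constraints on $\h$. Cohomogeneity two gives $\dim\h=\dim\l-2$, while Goursat's lemma presents $\h\subset\h_1\oplus\h_2$ as the preimage of the graph of an isomorphism $\h_1/\k_1\cong\h_2/\k_2$ for certain normal ideals $\k_i\triangleleft\h_i$, whose dimensions are rigidly determined by $\dim\k_1=\dim\h-\dim\h_2$ and $\dim\k_2=\dim\h-\dim\h_1$. In parallel, Proposition~\ref{PropPolCrit} together with the non-flatness hypothesis forces the normal space $\nu=\spann\{(X,-X),(Y,-Y)\}\subset\p$ to have $[X,Y]\neq 0$; the bracket computation carried out in the proof of Lemma~\ref{LmPolDiag} then shows that $\ss:=\spann_\R\{X,Y,[X,Y]\}\subset\l$ is an $\so(3)$-subalgebra, and that the three-dimensional Lie subalgebra of $\g$
\[
\{\,(A,\tau(A))\mid A\in\ss\,\},
\]
where $\tau$ is the inner involution of $\ss$ fixing $[X,Y]$ and negating $X$ and $Y$, must be orthogonal to $\h$ with respect to $-B_\g$.

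For each triple on Onishchik's list I would then enumerate the candidate subalgebras $\h$ via the pairs $(\k_1,\k_2)$ allowed by the dimension count and eliminate them in turn. When $\h$ comes out as a direct product $\h_1'\oplus\h_2'$ (i.e.\ $\k_i=\h_i$), Lemma~\ref{LmPolDiag} forces the action to be hyperpolar, contradicting the non-flat-sections hypothesis; when the Goursat graph fits inside a twisted diagonal of an ambient simple factor, Lemma~\ref{LmNoDiag} applies. The remaining ``mixed'' cases, where $\h$ has nontrivial Goursat data and falls outside the scope of either lemma, have to be ruled out by showing that no $\so(3)$-subalgebra of $\l$ yields a twisted diagonal in $\g$ orthogonal to the candidate $\h$; equivalently, that no 2-dimensional non-abelian Lie triple system $\nu\subset\p$ satisfies the polarity criterion against $\h$.

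The main obstacle will be this last step of the case analysis, especially for the large-dimensional families $\SO(2n)=\SO(2n-1)\cdot\U(n)$ and $\SO(4n)=\SO(4n-1)\cdot\Sp(n)\Sp(1)$, where Goursat's lemma admits many pairs $(\k_1,\k_2)$, and for the sporadic factorizations of $\SO(7),\SO(8)$ and $\SO(16)$, where the exotic embeddings of $G_2$, $\Spin(7)$ and $\Spin(9)$ make the bracket structure inside $\h_1\cap\h_2$ delicate. In each of these regimes, the technical heart of the argument is the non-existence of a principal $\so(3)\subset\l$ whose associated twisted diagonal in $\g$ is $-B_\g$-orthogonal to the remaining candidate $\h$.
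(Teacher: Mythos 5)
Your starting point matches the paper's: Oni\v{s}\v{c}ik's list of factorizations $L=H_1H_2$ (Table~\ref{TTrans}) and the Goursat-type presentation of $\h\subset\h_1\oplus\h_2$. But there is a genuine gap: everything after that is an unexecuted plan, and you yourself flag the ``mixed'' cases as the main obstacle. The idea you are missing is that the Goursat data alone already kills almost every line of the table, with no polarity analysis, no $\so(3)$-subalgebras, and no hunt for non-abelian Lie triple systems in $\nu$. Since $H_1\times H_2$ acts transitively while $H$ has cohomogeneity two, $\h$ is a \emph{proper} subalgebra of $\h_1\oplus\h_2$, so the common Goursat quotient $\h_1/\k_1\cong\h_2/\k_2$ is nontrivial. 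Because $\h_1$ and $\h_2$ are compact, hence reductive, this quotient splits off as an ideal in each of them: $\h_1$ and $\h_2$ must contain nontrivial \emph{isomorphic} ideals. Inspecting Table~\ref{TTrans}, this happens in exactly one case, $\Spin(7)\times\SO(7)$ acting on $\SO(8)$, where the common ideal forces $\h\cong\so(7)$ of dimension $21$; the orbits then have dimension at most $21<\dim\SO(8)-2=26$, contradicting cohomogeneity two. That is the entire proof. Your route, by contrast, would require carrying out the orthogonality computation for a twisted diagonal of an $\so(3)$ against every candidate $\h$ in the infinite families $\SO(2n)=\SO(2n{-}1)\cdot\U(n)$ and $\SO(4n)=\SO(4n{-}1)\cdot\Sp(n)\Sp(1)$ and in the sporadic cases --- work you have not done and which the hypothesis renders unnecessary.

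Two smaller inaccuracies in your setup would also need repair if you pursued your route. First, cohomogeneity two gives $\dim\h\ge\dim\mathfrak{l}-2$, with equality only when the principal isotropy algebra vanishes, so the dimensions of $\k_1$ and $\k_2$ are not ``rigidly determined'' as you claim; the subsequent enumeration of pairs $(\k_1,\k_2)$ is therefore not as constrained as your outline suggests. Second, the case $\k_i=\h_i$ (i.e.\ $\h=\h_1\oplus\h_2$) cannot occur here at all, since then $H$ would act transitively; you do not need Lemma~\ref{LmPolDiag} to dispose of it in this lemma.
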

\begin{table}
\begin{tabular}{*{3}{|c}|}
\hline
$H_1$ & $L$ & $H_2$ \\
\hline\hline
$\Sp(n)$ & $\SU(2n)$ & $\SUxU{2n{-}1}{1}$ \\

 & & $\SU(2n{-}1)$ \\
\hline
$\SO(2n{-}1)$ & $\SO(2n)$ &  $\U(n)$ \\

 & & $\SU(n)$ \\
\hline
$\SO(4n{-}1)$ & $\SO(4n)$ &  $\Sp(n)\product\Sp(1)$ \\

 & & $\Sp(n)\product\U(1)$ \\

 & & $\Sp(n)$ \\
\hline
$\LG_2$ & $\SO(7)$ &  $\SO(6)$ \\
\hline
$\LG_2$ & $\SO(7)$ &  $\SO(5)\,\SO(2)$ \\

 & & $\SO(5)$ \\

\hline
$\Spin(7)$ & $\SO(8)$ &  $\SO(7)$ \\
\hline
$\Spin(9)$ & $\SO(16)$ &  $\SO(15)$ \\
\hline
\end{tabular}
\bigskip
\caption{Transitive actions.} \label{TTrans}
\end{table}

\begin{proof}
Consider the ideals $\h_1' := \ker (\pi_2|\h)$ and $\h_2' := \ker (\pi_1|\h)$
of~$\h$, where $\pi_1$ and $\pi_2$ are the natural projections onto the two
simple factors of the Lie algebra of~$L \times L$. Let $\h_\Delta$ be an ideal
which is complementary to $\h_1' + \h_2'$ in $\h$. Then we have $\pi_1(\h) =
\pi_1(\h_\Delta) \oplus \h_1'$ and $\pi_2(\h) = \pi_2(\h_\Delta) \oplus \h_2'$.
Since $\h$ is a proper subalgebra of $\pi_1(\h) \times \pi_2(\h)$, we have
$\h_\Delta \neq 0$.

All transitive actions of groups $H_1\times H_2$ on simple compact Lie
groups~$L$ given by~(\ref{EqLRAction}), where $H_1, H_2 \subset L$ are closed
proper subgroups, were determined by Oni\v{s}\v{c}ik~\cite{oniscik}; the result
is given in Table~\ref{TTrans}. Inspection of the table shows that for $L$
simple there is only one case where the Lie algebras of $H_1$ and $H_2$ have a
non-trivial isomorphic ideal, namely the case of the $\Spin(7) \times
\SO(7)$-action on~$\SO(8)$. However, in this case $\h = \h_\Delta$ would be
isomorphic to the $21$-dimensional Lie algebra $\so(7)$, obviously
contradicting the assumption that $H$ acts on the $28$-dimensional Lie
group~$\SO(8)$ with cohomogeneity two.
\end{proof}

\begin{remark}\label{RemTrans}
It should be noted that Table~\ref{TTrans} has to be interpreted on the Lie
algebra level. Indeed, e.g.\ the transitive actions on~$\SO(8)$ of $\Spin(7)
\times [\SO(6)\,\SO(2)]$ and of $\Spin(7) \times [\SO(5)\,\SO(3)]$ correspond
to the actions of $\SO(7) \times \U(4)$ and $\SO(7) \times [\Sp(2) \product
\Sp(1)]$ under a triality automorphism of~$\Spin(8)$, cf.\
\cite[Proposition~3.3]{hyperpolar}. In particular, some entries of
\cite[Table~7]{oniscik} do not appear in our Table~\ref{TTrans} exactly for
this reason.
\end{remark}


\section{Concluding remarks}
\label{SecConclusion}


It follows from the main result of~\cite{lytchak} that polar actions on compact
irreducible Riemannian symmetric spaces of higher rank are hyperpolar if the
cohomogeneity is greater than two. In this article, we have shown that polar
actions of cohomogeneity two on the classical compact Lie groups of higher rank
with biinvariant Riemannian metrics are hyperpolar. Without restriction on the
cohomogeneity, this was shown in~\cite{polar} for symmetric spaces of Type~I
and for exceptional symmetric spaces of Type~II in~\cite{exc}. Thus we have now
completed the proof of Theorem~\ref{ThMain}. In particular, this finally
settles the conjecture of Biliotti~\cite{biliotti} affirmatively.

It now follows from the results of~\cite{hyperpolar} that any polar action of a
compact Lie group on a compact irreducible  Riemannian symmetric space of
higher rank is of cohomogeneity one or orbit equivalent to a Hermann action (it
can be both, as there are many Hermann actions of cohomogeneity one).
See~\cite[Theorem~B]{hyperpolar} for the classification of cohomogeneity one
actions up to orbit equivalence. Let us recall the two types of Hermann actions
in the case where the symmetric space $G/K$ is isometric to a simple compact
Lie group~$L$ with biinvariant Riemannian metric. In this case we may assume $G
= L \times L$ and any involutive automorphism of~$G$ is either of the form
$(\ell_1,\ell_2) \mapsto (\sigma^{-1}(\ell_2), \sigma(\ell_1))$, where $\sigma$
is an automorphism of~$L$ (of arbitrary order) or of the form
$(\ell_1,\ell_2) \mapsto (\sigma(\ell_1), \tau(\ell_2))$ where both $\sigma$
and $\tau$ are involutive automorphisms of~$L$. In the first case, the fixed
point set of the involution is $\Delta^\sigma = \{ (\ell, \sigma(\ell)) \mid
\ell \in L \} \subset G$. In the latter case, the fixed point set is given by
the cartesian product of the fixed point sets $L^\sigma \times L^\tau$.

As stated in Lemma~\ref{LmNoDiag}, any subgroup of~$L \times L$ whose action
on~$L$ is orbit equivalent to the action of~$\Delta^\sigma$ is actually
conjugate to~$\Delta^\sigma$. However, in case of the action of $L^\sigma
\times L^\tau$ on~$L$, there are many examples of orbit equivalent subgroups,
see~\cite{polar}.

On the compact rank-one symmetric spaces, polar actions have been classified
in~\cite{pth1}.


\end{document}